\begin{document} 
 \theoremstyle{plain} 
 \newtheorem{theorem}{Theorem}[section] 
 \newtheorem{lemma}[theorem]{Lemma} 
 \newtheorem{corollary}[theorem]{Corollary} 
 \newtheorem{proposition}[theorem]{Proposition}

\theoremstyle{definition} 
\newtheorem*{definition}{Definition}
\newtheorem{example}[theorem]{Example}
\newtheorem{remark}[theorem]{Remark}
\title[Abelian Hopf Galois structures]{ Abelian Hopf Galois structures from almost trivial commutative nilpotent algebras}
\author{Lindsay N. Childs}
\address{Department of Mathematics and Statistics\\
University at Albany\\
Albany, NY 12222}
\email{lchilds@albany.edu}
\date{\today}
\newcommand{\Zpn}{\mathbb{Z}/p^n\mathbb{Z}}
\newcommand{\NN}{\mathbb{N}} 
\newcommand{\QQ}{\mathbb{Q}} 
\newcommand{\RR}{\mathbb{R}} 
\newcommand{\CC}{\mathbb{C}} 
\newcommand{\FF}{\mathbb{F}} 
\newcommand{\ZZ}{\mathbb{Z}}
\newcommand{\ZZm}{\mathbb{Z}/m\mathbb{Z}}
\newcommand{\ZZp}{\mathbb{Z}/p\mathbb{Z}}
\newcommand{\T}{\Theta} 
\newcommand{\af}{\alpha} 
\newcommand{\bt}{\beta} 
\newcommand{\ep}{\epsilon} 
\newcommand{\ee}{\end{eqnarray}} 
\newcommand{\no}{\noindent} 

\newcommand{\ben}{\begin{eqnarray*}} 
\newcommand{\een}{\end{eqnarray*}} 
\newcommand{\dis}{\displaystyle} 
\newcommand{\beal}{\[ \begin{aligned}} 
\newcommand{\eeal}{ \end{aligned} \]} 
\newcommand{\sg}{\sigma} 
\newcommand{\eps}{\varepsilon} 
\newcommand{\lb}{\lambda} 
\newcommand{\gm}{\gamma} 
\newcommand{\dt}{\delta} 
\newcommand{\zt}{\zeta} 
\newcommand{\Gm}{\Gamma} 
\newcommand{\bpm}{\begin{pmatrix}} 
\newcommand{\epm}{\end{pmatrix}} 
\newcommand{\Fp}{\mathbb{F}_p} 
\newcommand{\Fpx}{\mathbb{F}_p^{\times}} 

\newcommand{\mcF}{\mathcal{F}} 
\newcommand{\mcp}{\mathcal{p}}
\newcommand{\mbF} {mathbf{F}}
\newcommand{\mcG}{\mathcal{G}}
\newcommand{\mcD}{\mathcal{D}}
\newcommand{\mcP}{\mathcal{P}}
\newcommand{\mcQ}{\mathcal{Q}}
\newcommand{\mcU}{\mathcal{U}}
\newcommand{\mcS}{\mathcal{S}}

\newcommand{\mcH}{\mathcal{H}} 
\newcommand{\hb}{\hat{b}} 
\newcommand{\ha}{\hat{a}}
\newcommand{\hc}{\hat{c}}
\newcommand{\z}{\hat{\zeta}} 
\newcommand{\noi}{\noindent} 
\newcommand{\hv}{\hat{v}} 
\newcommand{\hp}{\hat{p}} 
\newcommand{\hpi}{\hat{p}_{\psi_i}}
\newcommand{\hpw}{\hat{p}_{\psi_1}}
\newcommand{\hpe}{\hat{p}_{\psi_e}}
\newcommand{\hpd}{\hat{p}_{\psi_d}}
\newcommand{\NBP}{Norm_B(\mcP)}
\newcommand{\mcR}{\mathcal{R}}
\newcommand{\hpx}{\hat{p}_{\chi}}
\newcommand{\Af}{\text{Aff} }
\newcommand{\tb}{\textbullet \ }

\newcommand{\mcO}{\mathcal{O}}
\newcommand{\GL}{\mathrm{GL}}
\newcommand{\mbx}{\mathbf{x}}
\newcommand{\mby}{\mathbf{y}}
\newcommand{\mfOK}{\mathfrak{O}_K}
\newcommand{\mfOL}{\mathfrak{O}_L}

\newcommand{\mfA}{\mathfrak{A}}

\newcommand{\M}{\mathrm{M}}
\newcommand{\Aut}{\mathrm{Aut}}
\newcommand{\End}{\mathrm{End}}
\newcommand{\Perm}{\mathrm{Perm}}
\newcommand{\Hol}{\mathrm{Hol}}
\newcommand{\Sta}{\mathrm{Sta}}
\newcommand{\GO}{\mathrm{GO}}
\newcommand{\PGL}{\mathrm{PGL}}
\newcommand{\diag}{\mathrm{diag}}

\begin{abstract} Let $L/K$ be a Galois extension of fields with Galois group $G \cong (\Fp^n, +)$, an elementary abelian $p$-group of rank $n$ for $p$ an odd prime.  It is known that nilpotent $\Fp$-algebra structures $A$ on $G$ yield regular subgroups of the holomorph $\Hol(G)$, hence Hopf Galois structures on $L/K$.  In this paper we illustrate the richness of Hopf Galois structures on $L/K$ by examining the case where $A$ is abelian of $\Fp$-dimension $n$ where $\dim(A^2) = 1$.   We determine the number of Hopf Galois structures that arise in these cases,  describe those structures explicitly, and estimate the extent of failure of surjectivity of the Galois correspondence for those structures. 
\end{abstract}
\maketitle

\section{Introduction}  
 In 1969, Chase and Sweedler [CS69] defined the notion of a Hopf Galois extension of fields by abstracting the formal properties of a classical Galois extension of fields.  In 1987, Greither and Pareigis [GP87] discovered that a classical Galois extension $L/K$ of fields with Galois group $G$ could also be a Hopf Galois extension for a $K$-Hopf algebra $H$ other than $H = KG$, the group ring of the Galois group, acting in the obvious way on $L$.  They showed that determining the number of Hopf Galois structures on $L/K$ depends solely on the Galois group $G$. More precisely, the Hopf Galois structures correspond to regular subgroups $N$ of the permutation group of $G$ that are normalized by the image $\lb(G)$ of the left regular representation of $G$ in $\Perm(G)$.   In many examples the subgroup $N$ of $\Perm(G)$ need not be isomorphic to $G$, so we define  the \emph{type} of the Hopf Galois extension of $L/K$ corresponding to $N$ to be the isomorphism class of the group $N$.  
 
Since the appearance of [GP87] there has been a fairly steady sequence of papers studying the number of Hopf Galois structures on a Galois extension of fields $L/K$ with Galois group $G$.  These range from Byott's uniqueness paper [By96] and his  theorem [By04] that if $G$ is a non-abelian simple group then $L/K$ has exactly two Hopf Galois structures, to papers that for suitable Galois groups $G$ describe large numbers of Hopf Galois structures on $L/K$ , e.g. [Ch15], or describe Hopf Galois structures of all possible types, e. g. [AB18]. 

Counting Hopf Galois structures on a field extension with Galois group $G$ is often made easier by translating the problem of finding regular subgroups  of $\Perm(G)$ that are isomorphic to a given group $N$ and are normalized by $\lb(G)$ to a problem of finding regular subgroups  of $\Hol(N)$ that are isomorphic to $G$.  This translation from $\Perm(G)$ to $\Hol(N)$ was first codified in [By96], and has been the approach of choice for most papers devoted to counting Hopf Galois structures.  

 In [CDVS06] and subsequently in [FCC12] Caranti, et. al.  showed that for a finite abelian $p$-group, any commutative regular subgroup of $\Hol(G)$ can be obtained as the circle, or adjoint, group of a commutative nilpotent algebra structure $(G, +, \cdot)$ on the additive group $G$.  Meanwhile, Rump [Ru07] defined a left brace and showed that if $(A, +, \cdot)$ is a radical ring, then $(A, \circ, +)$ is a left brace, and Guarneri and Vendramin [GV17] extended the concept to that of a skew left brace by relaxing the commutativity assumption on the operation $+$.  In particular, they characterized skew braces as follows:

\begin{theorem}  Let $(B, \circ, \star)$ be a set with two group operations $\circ$ and $\star$.  Let $\lambda_{\circ}, \lambda_{\star}:  B  \to \Perm(B)$ be the two left regular representation maps, defined by $\lambda_{\circ}(b) (x) = b \circ x$, $\lambda_{\star}(b) (x) = b \star x$.  Let $\Hol(B, \star) \subset \Perm(B)$ be the normalizer of $\lambda_{\star}(B)$ in $\Perm(B)$.  Then $B$ is a skew left brace if and only if $\lambda_{\circ}(B) \subset \Hol(B, \star)$.  \end{theorem}

Subsequently, Byott and Vendramin [SV18] showed that  $(B, \circ, \star)$ is a skew left brace if and only if  there exists a Galois extension $L/K$ with Galois group $G$ and a Hopf Galois structure of type $N$ so that $G \cong (B, \circ)$ and $N \cong (B, \star)$.  Their observation follows from the fact from Greither-Pareigis that if $N = (B, \star)
$ is normalized by $\lambda_{\circ}(B) = \lambda(G)$ in $\Perm(G)$, then $N$ corresponds to a Hopf Galois structure on $L/K$, and conversely, if $L/K$ is $G$-Galois and has a Hopf Galois structure corresponding to the regular subgroup $N$ of $\Perm(G)$ normalized by  $\lambda_{\circ}(G)$ in $\Perm(G)$, then $N$ defines a new group structure $(G, \star) \cong N$ on $G$ which makes $(G, \circ, \star)$ into a skew brace.

To illustrate the richness of Hopf Galois structures, especially on Galois extensions with Galois group an elementary abelian $p$-group, insights can be gained by just looking at those arising from nilpotent $\Fp$-algebras.  
In this paper we look at Hopf Galois structures corresponding to a class of commutative nilpotent $\Fp$-algebras $A$ of $\Fp$-dimension $n$ that are ``almost'' trivial.  Thus we assume that $A$ has the property that $\dim_{\Fp}(A^2) = 1$  and $A^3 = 0$.  If $A^2 = 0$, then the Hopf Galois structure on a Galois extension with Galois group $G \cong (A, +)$ is unique, namely that given by the Galois group, so our examples of nilpotent algebras are about as close to being trivial as possible.

We determine the isomorphism types of commutative nilpotent $\Fp$-algebras $A$ of dimension $n$ with $A^3 = 0$ and $\dim(A^2) = 1$, and determine the number of regular subgroups of $\Hol(G)$ associated to each isomorphism type.   For $n = 4$  this approach yields more than $p^9$ regular subgroups.  We describe the Hopf Galois structure on $L/K$ corresponding to each regular subgroup arising from a given isomorphism type of algebra.  We also explicitly describe the Hopf algebra action on $L/K$, and estimate the extent of failure of the Galois correspondence for the Hopf Galois structure to map onto the intermediate fields between $K$ and $L$.

Throughout, let $L/K$ be a Galois extension of fields with Galois group $\Gm$, an elementary abelian $p$-group of order $p^n$, $p$ an odd prime.  For a finite abelian $p$-group $G$ with operation $+$, a ring structure $A = (G, +, \cdot)$ on $G$ will be called nilpotent if $A$ is associative and nilpotent:  $A^m = 0$ for some $m > 1$.
 
This research was inspired by discussions with Tim Kohl.  Many thanks to him for sharing his enthusiasm with me.

\section{Hopf Galois structures from nilpotent algebras}

Let $A = (A, +, \cdot)$ be a nilpotent $\Fp$-algebra of $\Fp$-dimension $n$.  The adjoint group $(A, \circ)$   of $A$ is the set $(A, \circ)$ with the circle operation $\circ$ defined by $a \circ b = a + b + a\cdot b$ for $a, b$ in $A$. The identity element is $0$.  Since $A$ is nilpotent, the circle inverse $a$ of $a$ is defined by 
\[ a = -a + a^2 - a^3 + \ldots ,\]
where $a^r = a \cdot a \cdot \ldots \cdot a$ ($r$ factors).  It is well known since [Ru07] that then $(A, \circ, +)$ is a left brace with additive group $(A, +)$, that is, for all $a, b, c$ in $A$
\[ a \circ(b + c) = a \circ b - a + a \circ c .\]
More recently ([Ch18]) it was observed that if $A^3 = \{ a\cdot b \cdot c | a, b, c \text{ in } A\}  = 0$, then $A$ is a bi-skew brace, that is, $(A, +, \circ)$ is also a skew brace with $(A, \circ)$ the additive group.  Thus

\begin{proposition}\label{key}  Let $(A, +)$ be an abelian $p$-group of finite order $p^n$, and let $(A, +, \cdot)$ be a nilpotent ring structure on $(A, +)$.  Let $(A, \circ)$ be the adjoint group on $A$ and let $\lambda_+, \lambda_{\circ}$ be the corresponding left regular representations of $A$ into $\Perm(A)$.  Then $\lambda_\circ(A)$ is normalized by $\lambda_+(A)$ if and only if $A^3 = 0$.  \end{proposition}

\begin{proof}    We have that for all $z$ in $A$, 
\[ \lb_{\circ}(x)(z) = x + z + x\cdot z \]
while
\[ \lb_{+}(y)(z) = y + z .\]

Suppose for each $x, y$ in $A$ there is some $w$ in $A$ so that
\[ \lb_+(y)\lambda_{\circ}(x)\lb_+(-y) = \lambda_{\circ}(w).\]
Applied to an element $z$ of $G$, the left side is 
\beal 
 \lb_+(y)\lambda_{\circ}(x)\lb_+(-y) (z) &= y + \lambda_{\circ}(x)(z-y)\\
&= y + x + (z-y) + x\cdot(z  - y)\\
&= x + z + x \cdot z - x \cdot y. \eeal
The right side is 
\[\lambda_{\circ}(w)(z) = w + z + wz .\]
So for all $x, y, z$ in $A$, we must have
\[ x + z + x\cdot z - x \cdot y = w + z + w\cdot z .\]
In particular, for $z = 0$, we have
\[ x - x\cdot y = w .\]
Then for all $z$ in $A$, we must have
\[ x \cdot z =  x\cdot z - x \cdot y \cdot z .\]
This holds if and only if $x \cdot y \cdot z = 0$.  
Thus if $A^3 = 0$, then for all $x, y$ in $A$, 
\[ \lambda_+(y)\lambda_{\circ}(x)\lambda_+(-y) = \lambda_{\circ}(x - x\cdot y) \]
so $\lambda_{\circ}(A)$ is normalized by $\lambda_+(A)$.  But if $A^3 \ne 0$, there exist $x, y$ in $A$ so that $ \lambda_+(y)\lambda_{\circ}(x)\lambda_+(-y) $ does not  $= \lambda_{\circ}(w)$ for any $w$ in $A$, and so $\lambda_{\circ}(A)$ is not normalized by $\lambda_+(A)$ in $\Perm(A)$.  \end{proof}  

We identify the corresponding  Hopf Galois structures:

\begin{corollary}  Let $L/K$ be a Galois extension with Galois group $(A, +) = G$, an abelian $p$-group of order $p^n$.  Let $A = (A, +, \cdot)$ be a commutative nilpotent ring structure on $(A, +)$ and suppose $A^3 = 0$.  Then $T = \lambda_{\circ}(A) \subset \Perm(A)$   yields a Hopf Galois structure  on $L/K$ by a $K$-Hopf algebra $H$, where 

i)  $H$ is the fixed ring of $LT$ under the action of $G$:
\[H = LT^G = \{ \sum_{x \in G} b_x\lambda_{\circ}(x) :  b_{x - x \cdot z} = b_x^z \text{ for all }z\text{ in }G \};\]
ii) $H$ acts on $L$ by 
\[ (\sum_{x \in G} b_x\lambda_{\circ})(a) = \sum_{x \in G} b_x a^{-x + x^2} \]
for $b, a$ in $L$.  \end{corollary}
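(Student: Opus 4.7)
The plan is to recognize this as a direct application of the Greither--Pareigis theorem [GP87], since Proposition~\ref{key} has already established the one nontrivial hypothesis, namely that $T$ is normalized by $\lambda(G)$. Given that, [GP87] immediately provides a Hopf Galois structure with Hopf algebra $H = (L[T])^G$, where $G$ acts on $L$ by Galois automorphisms and on $T$ by conjugation via $\lambda$; the remaining task is just to identify $H$ and its action on $L$ explicitly in terms of $A$.

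For part (i), I would invoke the conjugation formula already derived inside the proof of Proposition~\ref{key}: under the hypothesis $A^3 = 0$,
\[ \lambda(z)\,\tau(x)\,\lambda(-z) \;=\; \tau(x - x \cdot z). \]
Thus the $G$-action on $L[T]$ sends $\sum_{x} b_x \tau(x)$ to $\sum_x b_x^{\,z}\, \tau(x - x\cdot z)$. Re-indexing by $y = x - x \cdot z$ (a bijection of $G$ for each $z$, because conjugation is a bijection of $T$) turns $G$-invariance into exactly the coefficient condition $b_{x - x \cdot z} = b_x^{\,z}$ stated in (i).

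For part (ii), I would apply the standard Greither--Pareigis description of how a regular subgroup of $\Perm(G)$ acts on $L$: an element $\eta \in T$ acts on $a \in L$ as the Galois translate $a^{\eta^{-1}(0)}$. For $\eta = \tau(x)$, this requires the $\circ$-inverse of $x$, which in general is the finite sum $-x + x^2 - x^3 + \cdots$; under $A^3 = 0$ it collapses to $-x + x^2$. Hence $\tau(x)^{-1}(0) = -x + x^2$, and extending $L$-linearly yields the formula in (ii).

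The main obstacle is really bookkeeping rather than mathematics: the Hopf--Galois literature uses varying conventions for how a regular subgroup acts on $L$ (one sees both $\eta(0)$ and $\eta^{-1}(0)$ in different sources) and for the direction of $\lambda$-conjugation on $T$. Once these are pinned down to match the statement of the corollary, both (i) and (ii) follow mechanically from Proposition~\ref{key} combined with [GP87], with no further computation required.
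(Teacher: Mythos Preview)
Your proposal is correct and follows the same approach as the paper: both invoke Proposition~\ref{key} to get normalization by $\lambda(G)$, then apply the Greither--Pareigis descent, using the conjugation formula $\lambda(z)\tau(x)\lambda(-z)=\tau(x-x\cdot z)$ for part~(i) and the $\circ$-inverse $-x+x^2$ for part~(ii). The only difference is expository: the paper carries out the descent explicitly via the dual basis $\{e_z\}$ of $L[G]$ and the embedding $a\mapsto\sum_y a^y e_y$ of $L$ into $GL$, arriving at $\sum_{x\circ y=0} b_x a^y$ before simplifying, whereas you cite the resulting formula $\eta\cdot a = a^{\eta^{-1}(0)}$ as a standard consequence of [GP87].
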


\begin{proof}  Let $\{e_z : z \in G\}$ be the dual basis to the basis $G = (A, +)$ of the group ring $L[G]$.  
The action of $T = \lambda_{\circ}(A)$ on $GL = \sum_{z \in G} L e_z$ is by 
\[\lambda_{\circ}(x)(e_z) = e_{x \circ z}\]
 for $x$ in $G$.   This yields an action of the group ring $LT$ on $GL$ making $GL$ an $LT$-Hopf Galois extension of $L$.  Since $\lb_+(G)$ acts on $T$ by 
\[\lb_+(z)\lambda_{\circ}(x)\lb_+(-z) = \lambda_{\circ}(x - x\cdot z),\]
the corresponding $K$-Hopf algebra is 
\[  H = LT^G = \{ \sum_{x \in G} b_x \lambda_{\circ}(x):  b_{x - x \cdot z} = b_x^z \text{  for all } z \text{ in } G \}\]
where for $a$ in $L$ and $y$ in $G$, $a^y$ is the image of $a$ under the Galois action of $y$ on $L$, and $H$ acts on $GL$ by 
\[ ( \sum_{x \in G}b_x\lambda_{\circ}(x))( \sum_{y \in G}   a_ye_y) =   \sum_{x, y \in G} b_xa_y e_{x \circ y}. \]
Now $L$ embeds in $GL$ by
\[ a \mapsto  \sum_{y \in G} a^ye_y .\]
So the action of $H$ on $L$ is by 
\[ (\sum_{x \in G} b_x \lambda_{\circ}(x))(a)=  \sum_{x, y  \in G, x \circ y = 0} b_xa^y.\]
Since $x^3 = 0$, $x \circ (-x + x^2) = 0$,  and so the action of $H$ on $L$ can be written
\[ (\sum_{x \in G} b_x \lambda_{\circ}u(x))(a)=  \sum_{x \in G}  b_xa^{-x + x^2} .\]
\end{proof}

There is no a priori reason why $(A, \circ)$ and hence $T = \lambda_{\circ}(A)$ should be isomorphic to $G$, so that $H$ has type $G$.  But if $A$ is commutative, it is true:  we note the following variant of Theorem 1 of [FCC12]:

\begin{proposition}\label{FCC}  Let $p > 3$ be an odd prime and $G = (G, +)$ be a finite abelian $p$-group of order $p^n$.  Let $A = (G, +, \cdot)$ be a commutative nilpotent ring structure on $(G, +)$ and suppose $A^3 = 0$.  Then the regular subgroup  $N = (G, \circ)$ of $\Hol(G) \subset \Perm(G)$ is isomorphic to $(G, +)$.  \end{proposition}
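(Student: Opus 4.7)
The plan is to exhibit an explicit isomorphism $\phi: (G, +) \to (G, \circ)$ by a truncated exponential map. Since $(G, +)$ is a finite abelian $p$-group with $p$ odd, multiplication by $2$ acts invertibly on $G$, and hence $\tfrac{1}{2} x^2 \in A^2$ is a well-defined element of $G$ for each $x \in A$. I would define
\[ \phi(x) = x + \tfrac{1}{2} x^2. \]

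The key step is to verify $\phi(x + y) = \phi(x) \circ \phi(y)$ by direct computation. In the product
\[ (x + \tfrac{1}{2}x^2)(y + \tfrac{1}{2}y^2), \]
every term other than $xy$ has at least three factors drawn from $A$ and so vanishes by $A^3 = 0$. Thus
\[ \phi(x) \circ \phi(y) = x + y + \tfrac{1}{2}x^2 + \tfrac{1}{2}y^2 + xy, \]
which equals $(x + y) + \tfrac{1}{2}(x + y)^2 = \phi(x + y)$ once $A^3 = 0$ is used to expand $(x+y)^2 = x^2 + 2xy + y^2$.

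To finish, I would argue that $\phi$ is a bijection, and hence a group isomorphism. The cleanest route is to exhibit the two-sided inverse $\psi(x) = x - \tfrac{1}{2} x^2$: the same sort of cancellation, using that $\phi(x)^2 = x^2$ modulo $A^3$, yields $\psi \circ \phi = \mathrm{id}$. Alternatively, $\phi(x) = 0$ forces $x = -\tfrac{1}{2}x^2 \in A^2$, whence $x^2 \in A^4 = 0$ and so $x = 0$; injectivity plus the finiteness of $G$ then suffices. There is essentially no obstacle here: the only delicate point is needing $2$ to act invertibly on $G$, which is exactly the hypothesis that $p$ is odd. The stronger condition $p > 3$ in the statement comes from matching the general hypothesis $m+1 < p$ of \textnormal{[FCC12]} specialized to nilpotency class $m = 2$, but the case $A^3 = 0$ treated here already works for every odd prime $p$.
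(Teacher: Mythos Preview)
Your proof is correct and is genuinely different from, and more elementary than, the paper's treatment. The paper does not give a self-contained argument: it simply observes that Proposition~\ref{FCC} is a variant of Theorem~1 of [FCC12], whose proof proceeds by an induction argument, and notes that the hypothesis $A^3 = 0$ forces $a^p = 0$ for all $a \in A$, which removes the need for the $p$-rank condition in that induction. Your approach bypasses the induction entirely by writing down the truncated exponential $\phi(x) = x + \tfrac{1}{2}x^2$ and checking directly that it is an isomorphism $(G,+) \to (G,\circ)$; this has the advantage of being explicit, self-contained, and visibly valid for every odd prime $p$, not just $p > 3$. The paper's route has the advantage of plugging into a more general framework (the [FCC12] argument handles arbitrary nilpotency class under a rank hypothesis), but for the case $A^3 = 0$ your argument is cleaner.

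One small inaccuracy in your closing remark: in [FCC12] the parameter $m$ in the hypothesis $m+1 < p$ is the $p$-rank of $G$, not the nilpotency class of $A$. So the condition $p > 3$ in the proposition is not literally the specialization of that hypothesis to ``class $2$''; it appears rather to be an artifact of following the [FCC12] proof structure. Your observation that $p$ odd already suffices for the $A^3 = 0$ case is correct and worth keeping.
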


The statement of Theorem 1 of [FCC12] replaces the condition $A^3 = 0$ in Proposition \ref{FCC} by the condition that the $p$-rank $m$ of $G$ should satisfy $m+1 < p$.   The proof of Proposition \ref{FCC} is essentially the same as that of Theorem 1 of [FCC12].  The only change is that the condition $A^3 = 0$ implies that $a^p = 0$ for all $a$ in $A$, which slightly simplifies the proof in [FCC12] by eliminating the need to apply a condition on the $p$-rank of $G$ to insure that $a^p$ does not interfere with the induction argument.

\section{Working in the affine group} 

 For the remainder of the paper we restrict $G$ to be an elementary abelian $p$-group of $p$-rank $n > 1$, and we consider only commutative nilpotent ring structures $A$ on $(G, +)$  with $A^3 = 0$.  From [Ch15], it is known that  the number of isomorphism types of such structures is bounded from below by  $p^b$ where $b = O(n^3)$.   

 Each such ring is an $\Fp$-algebra.  Let $\dim_{\Fp} A/A^2 = r$ and $\dim_{\Fp}A^2 = n-r$.  Given an $\Fp$-basis 
$( x_1, \ldots, x_{r})$ of $A/A^2$ and a basis $(y_1, \ldots y_{n-r})$ of $A^2$,  the multiplicative structure of $A$ with those bases is given by 
 a set $\Phi^{(k)} = (\phi_{i j}^{(k)})$ of $r \times r$ symmetric matrices with coefficients in $\Fp$, by the equations
 \[ x_ix_j = \sum_{k = 1}^{n-r} \phi_{i, j}^{(k)} y_k .\]
The group structure $(G, \circ)$ on $G$ arising from $(A, +, \cdot)$ depends on the matrices $\{ \Phi^{(k)}\}$, and hence so does the regular subgroup $T = \lambda_{\circ}(G)$ of $\Perm(G)$.   

It is convenient to view $\Hol(G)$ as the affine group $\Af_n(\Fp)$  and realize the regular subgroup $T$ inside $\Af_n(\Fp)$.

Let $\Af_n(\Fp)$ be the subset of $\GL_{n+1}(\Fp)$ consisting of matrices of the form
\[ \bpm B&v\\0&1 \epm ,\]
where $B$ is an $n \times n$ matrix, $v$ is a column vector in $\Fp^n$, $0$ is a $n$-row vector of zeros and 1 is a $1 \times 1$ identity matrix.  Then $\Af_n(\Fp)$ may be identified as the holomorph $\Hol(\Fp^n) = \lb(\Fp^n) \cdot \Aut(\Fp^n)$ of the additive group $\Fp^n$, where the matrices 
\[ \bpm P & 0\\0 & 1 \epm\]
with $P$ in $\GL_n(\Fp)$ form the subgroup $\Aut(\Fp^n)$ of $\Hol(\Fp^n)$,  and matrices 
\[ \bpm I &x\\0 & 1 \epm \]
for $x$ in $\Fp^n$ form the subgroup $\lambda(\Fp^n)$.

The group $T = \lambda_{\circ}(A)$ embeds  as a regular subgroup of $\Af_n(\Fp)$ as follows:

The map $\lambda_{\circ}$  from $\Fp^n$ to $\Fp^n$ is given by $\lambda_{\circ}(x)(y) = x\circ  y = x + y + x\cdot y$.  Write  $\lambda_{\circ}(x)(y) = x + y +  L_{x}(y)$, where $L_{x}(y) = x\cdot y$.  Then $L_{x}$ is a linear function from $\Fp^n$ to $\Fp^n$, so has a matrix relative to the standard basis of $\Fp^n$ that we also call $L_{x}$.   Then $\lambda_{\circ}(x)$ in $\Af_n(\Fp)$  becomes  the $n+1 \times n+1$ matrix
\[ T_{x} = \bpm I + L_{x} & x\\0 & 1\epm,\]
because for any $y$ in $\Fp^n$, we have
\[ T_{x} \bpm y\\1\epm = \bpm y + L_{x}(y)  +{x}\\1  \epm = \bpm y +{x}\cdot y + {x}\\1 \epm = \bpm \lambda_{\circ}(x)(y)\\1\epm.\]

\section{The case $r = n-1$}  

We now restrict to the class of examples where $\dim(A) = n,  \dim(A^2) = 1, A^3 = 0$.   Then  $A$ has the $\Fp$-basis $(z_1, \ldots, z_{n-1}, z_n)$ with $z_iz_j = \phi_{i, j}z_n$, so $A$ is determined by that basis and the single $n \times n$ structure matrix $\Phi = (\phi_{i  j})$.  Since $A^3 = 0$,  $\phi_{ni} = \phi_{in} = 0$ for all $i$.  In this section we determine the regular subgroups of $\Af_n(\Fp)$ associated to $A$.  

Since $A$ is commutative, the structure matrix $\Phi$ is symmetric.   Then  (c.f. [Wi09, Section 3.4.6)  there is an invertible matrix $P$ so that $P\Phi P^T = D = \diag(D_s, 0)$ is diagonal, where  $D_s = \diag(1, \ldots, 1, s)$ is $k \times k$ for some $k \le n$, where $s =1$ if $k$ is odd, and $s$ is either 1 or any fixed non-square in $\Fp$  when $k$ is even.

So set $z = Px$.  Then $z_n = x_n$ and with respect to the basis $(z_1,  \ldots, z_{n-1}, z_n)$,  $A$ has the structure matrix $D$ with $z_iz_j = d_{ij}z_n$ and 
\[  D = \diag(d_1, \ldots, d_n) = \diag(1, 1, \ldots, 1, s, 0, \ldots 0) \]
with $s =d_k $.

So by that change of basis of $A$, we can realize the group $T$ conveniently in 
\[ \Hol(G) \cong \Af_n(\Fp) = \bpm \GL_n(\Fp) & \Fp^n\\0 & 1 \epm.\]
by picking the basis $(z_1, \ldots, z_n)$ for $A$ so that $\Phi = D$.

 Let $\{e_1, \ldots, e_n \}$ be the standard basis of $\Fp^n$ corresponding to the basis $\{z_1, \ldots z_n\}$ of $A = (A, +)$.  Then $\lambda_\circ (z_i) = T_i $ is the element
\[ T_i = \bpm L_i & e_i\\0 & 1 \epm, \]
which acts on $A =\{r = \sum_{i=1}^n r_ie_i :  r \in \Fp^n\}$ embedded as elements $\binom {r}1 $ in $\Fp^{n+1}$ by
\beal  \bpm L_i & e_i\\0 & 1 \epm \bpm e_j\\1 \epm &= \bpm e_i \circ e_j\\1 \epm = \bpm e_i + e_j \\1\epm \text{  for } i \ne j \\
\bpm L_i & e_i\\0 & 1 \epm \bpm e_i\\1 \epm &= \bpm e_i \circ e_i\\1 \epm = \bpm e_i + e_i  + d_ie_n\\1 \epm.\eeal

\section{Determining the number of Hopf Galois structures associated to $A$}

In this section we determine the number of Hopf Galois structures on a Galois extension $L/K$ of fields with Galois group $G = (\Fp^n, +)$ that correspond to certain isomorphism types of nilpotent algebra structures on $G$.  

To do so, we have 
\begin{proposition}  Let $A$ be a nilpotent $\Fp$-algebra structure on $(\Fp^n, +)$.  Then the number of Hopf Galois structures on $L/K$ corresponding to the isomorphism type of $A$ is equal to 
\[ |\GL_n(\Fp)|/|\Sta(T)| \]
where $T = \lambda_\circ(A)$ is the regular subgroup of $\Af_n(\Fp)$ corresponding to $A$ and
\[ \Sta(T) = \{ P  \in \GL_n(\Fp) : \bpm P & 0\\0&1\epm  T = T \bpm P & 0\\0&1\epm.\]
\end{proposition}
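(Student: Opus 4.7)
The plan is to apply the orbit--stabilizer theorem to the conjugation action of $\GL_n(\Fp) \hookrightarrow \Af_n(\Fp)$, embedded as the automorphism subgroup $\Aut(\Fp^n)$, on regular subgroups coming from nilpotent algebra structures. The bulk of the work is to recognize the Hopf Galois structures in the isomorphism class of $A$ as precisely the $\GL_n(\Fp)$-orbit of $T$ under this action, after which the proposition is immediate.

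To set this up, consider the natural action of $\GL_n(\Fp) = \Aut(\Fp^n, +)$ on multiplications on $(\Fp^n,+)$ by transport of structure: $P$ sends $(\Fp^n, +, \cdot)$ to $(\Fp^n, +, \cdot_P)$, where $\olx \cdot_P \oly = P(P^{-1}\olx \cdot P^{-1}\oly)$. The orbits of this action are precisely the $\Fp$-algebra isomorphism classes of nilpotent multiplications on $(\Fp^n, +)$. A direct matrix calculation using the description $T_{\olx} = \bpm I + L_{\olx} & \olx \\ 0 & 1 \epm$ with $L_{\olx}(\oly) = \olx \cdot \oly$ shows
\[ \bpm P & 0 \\ 0 & 1 \epm T_{\olx} \bpm P^{-1} & 0 \\ 0 & 1 \epm = T'_{P\olx}, \]
where $T'$ is the regular embedding built from $\cdot_P$; this reduces to the identity $L_{P\olx}^{(P \cdot A)} = P L_{\olx}^{(A)} P^{-1}$. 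Thus transport of algebra structure matches block-diagonal conjugation of the regular subgroup. Since $\olx \cdot \oly = \tau(\olx)(\oly) - \olx - \oly$ can be read off from $T$, distinct multiplications yield distinct regular subgroups, and hence the $\GL_n(\Fp)$-orbit of $T$ is in bijection with the algebra isomorphism class of $A$.

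To finish, I would note that $\lambda(\Fp^n)$ is the normal translation subgroup of $\Hol(\Fp^n) = \Af_n(\Fp)$, so conjugation by elements of $\Aut(\Fp^n)$ preserves the property of being normalized by $\lambda(\Fp^n)$. Under the hypothesis $A^3 = 0$ implicit throughout this section, Proposition \ref{key} gives that $T$ has this property, hence so does every member of its orbit, and by Greither--Pareigis each such subgroup yields a distinct Hopf Galois structure on $L/K$. The number of Hopf Galois structures corresponding to the isomorphism class of $A$ is therefore the size of the orbit, which orbit--stabilizer evaluates to $|\GL_n(\Fp)|/|\Sta(T)|$. The heart of the argument is the matrix identity in the middle paragraph; the rest is formal.
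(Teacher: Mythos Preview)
Your proof is correct and follows the same route as the paper: apply orbit--stabilizer to the conjugation action of $\Aut(G)=\GL_n(\Fp)$ on regular subgroups, using that algebra isomorphism classes correspond to $\Aut(G)$-conjugacy classes of the associated regular subgroups. The paper's proof is a one-line citation of [CDVS06] for this correspondence; you instead supply the verification directly via the matrix identity $\bpm P&0\\0&1\epm T_{\olx}\bpm P^{-1}&0\\0&1\epm = T'_{P\olx}$, which is essentially what [CDVS06] does. Your final paragraph, linking each regular subgroup in the orbit to a Hopf Galois structure via Proposition~\ref{key} and Greither--Pareigis under the ambient hypothesis $A^3=0$, matches the remark the paper places immediately after the proposition.
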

This follows by [CDVS06], which showed that two nilpotent $\Fp$-algebras on $(\Fp^n, +)$ are isomorphic if and only if the corresponding regular subgroups of $\Af_n(\Fp)$ are conjugate by an element of $\Aut(G) = \bpm \GL_n(\Fp) & 0\\0 & 1 \epm$ in $\Af_n(\Fp)$.
 
 We note that given a regular subgroup $T$ of $\Af_n(\Fp)$ normalized by $\lb_+(A)$,  corresponding to $A$ with $A^3 = 0$, then all of the regular subgroups in the orbit of $T$ under conjugation by $\Aut(G)$ correspond to algebras $A_1$ isomorphic to $A$, hence have $A_1^3 = 0$.  Thus all are normalized by $\lb(G)$.   Hence by Galois descent, all of those regular subgroups give rise to Hopf Galois structures on a Galois extension $L/K$ with Galois group $G$.  
 
The commutative nilpotent $\Fp$-algebra $A$ with $A^2 = 0$  yields the classical Galois structure on a Galois extension with Galois group $G$.  For then  $\Phi = D = 0$, and the corresponding regular subgroup $T$ of $\Af_n(\Fp)$ is $\lb_+(A)$, which is stable under conjugation by every element of $\Aut(G)$, hence yields only the classical Galois structure on $L/K$.      

Since $p$ is odd, we may assume that  $\Phi  = \diag(D_s, 0)$. Let $\overline{v}_s = (r_1, r_2, \ldots, r_{k-1}, sr_k)^T$, $\overline{v} =  (r_1, r_2, \ldots, r_{k-1}, r_k)^T$,  and $\overline{w} = (r_{k+1}, \ldots, r_{n-1})^T$  (column vectors of elements of $\Fp$).  Then it is convenient to write elements of $T = \lambda_\circ(A)$ as  block matrices of the form 
\[ T = \{\lambda_{\circ}(r) =  \bpm I & 0 & 0 & \overline{v} \\ 0 & I & 0 &\overline{w} \\ \overline{v}_s ^T & 0 & 1 & r_n\\0 & 0 & 0 & 1 \epm :  r \in \Fp^n \} \]
where the diagonal entries are  identity matrices of size $k \times k$, $(n-1-k) \times (n-1-k)$, $1 \times 1$ and $1 \times 1$, respectively.

To determine the number of Hopf Galois structures corresponding to regular subgroups in the orbit of $T$, we need to find the stabilizer of  $T$ under conjugation by the elements of $\Aut(G) = \GL_n(\Fp)$.  

To determine the stabilizer of $T$, we   seek the set of $(n+1) \times (n+1)$ matrices 
\[  Q = \bpm P & 0\\0 & 1 \epm \]
in $\Aut(G) \subset \Af_n(G)$ so that $QTQ^{-1} = T$, where $P$ in $\GL_n(\Fp)$ has the form
\[ P = \bpm  P_{11} & P_{12} & P_{13}\\P_{21} & P_{22} & P_{23}\\ P_{31} & P_{32} & P_{33} \epm \]
with blocks of the same size as $\lambda_{\circ}(r)$.  Let $\lambda_{\circ}(r')$ be another element of $T$. 
We compute $P\lambda_{\circ}(r)$  and $\lambda_{\circ}(r')P$  and set $P\lambda_{\circ}(r) =\lambda_{\circ}(r')P$.

Equating the (11) terms yields that $P_{13} = 0$.  

Equating the (21) terms yields that $P_{23} = 0$.  

Equating the (32) terms yields that $P_{12} = 0$.  

Then equating the (31) terms yields
\[\overline{v}_s'^TP_{11} = P_{33}\overline{v_s}^T.\]

Equating the (14) terms yields
\[ P_{11}v = v' .\]

Equating the (24) terms yields
\[ w' = P_{21}v + P_{22}w.\]

Equating the (34) terms yields 
\[ t' = P_{31}v + P_{32} w + P_{33} t .\]

The (24) and (34) equations define $w'$ and $t'$.  Setting $P_{33} = q$, a non-zero element of $\Fp$, then from (14) and (31) we have
\[ P_{11}^T v_s' = q  v_s  \text{  and  }  P_{11}v =v' .\]
Recalling that $D_s = \diag(1, \ldots 1, s)$, a $k \times k$ matrix, then $D_s v = v_s$, $D_s v' = v_s'$.  So
\[ P_{11}^TD_sv' = qD_sv, \]
hence
\[ P_{11}^TD_sP_{11}v =  qD_s v.\]
Thus $P$ is in the stabilizer of $T$ if 
\[ P =  \bpm  P_{11} &0 & 0 &0 \\P_{21} & P_{22} & 0&0\\ P_{31} & P_{32} & P_{33}&0\\0&0&0&1 \epm \]
where 

$P_{33} = q$ is in $\GL_1(\Fp)$;

$P_{32}$ is $1 \times (n-1-k)$ and arbitrary;

$P_{31}$ is $1 \times k$ and arbitrary;

$P_{22}$ is in $\GL_{n-1-k}(\Fp)$;

$P_{21}$ is $(n-1-k) \times k$ and is arbitrary; and

$P_{1 1}$ is in $\GL_k(\Fp)$ and satisfies   $P_{11}^T D_s P_{11} = qD_s$.  

As noted above,  we may assume that $A$ has a basis for which $A$ has the structure matrix 
  $D =  \bpm D_s &0\\0 & 0 \epm$ where $D_s = \diag(1, 1,\ldots, s)$  is  $k \times k$,  $k < n$ and
  
1) $k$ is odd and $s = 1$;

2) $k$ is even  and $s = 1$;  

3)  $k$ is even and $s$ is a non-square in $\Fp$. 

We may then determine the possible $P_{11}$ in each of the three cases.  The notation for the orthogonal groups over $\Fp$ is from [Wi09],  Section 3.7.

\begin{proposition} \label{sta}
For Case 1), let $k = 2m+1$ and $s = 1$.  For all $q \ne 0$ in $\Fp$, there exists a $k \times k$ matrix $C$ so that $C^TC = qI$ if and only if $q$ is a square.  Fixing  $C$, then  $P_{11}^TP_{11} = qI$ if and only if $P_{11} = CU$ for $U$ in $\GO_{2m+1}(\Fp)$.

For Case 2), let $k = 2m$ and $s = 1$.  For all $q \ne 0$ in $\Fp$, there exists a $k \times k$ matrix $C$ so that $C^TC = qI$.  Fixing  $C$, then $P_{11}^TP_{11} = qI$ if and only if $P_{11} = CU$ for $U$ in $\GO_{2m}^+(\Fp)$.

For Case 3), let $k = 2m$ and $s$ be a non-square in $\Fp$.   For all $q \ne 0$ in $\Fp$, there exists a $k \times k$ matrix $C$ so that $C^TD_s C = qD_s$. Fixing  $C$, then $P_{11}^TD_sP_{11} = qD_s$ if and only if $P_{11} = CU$ for $U$ in $\GO_{2m}^-(\Fp)$.
\end{proposition}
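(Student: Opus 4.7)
The plan is to separate the proposition into two logically distinct tasks. First, I would establish the \emph{parametrization}: assuming a single ``auxiliary'' matrix $C$ satisfying $C^T D_s C = qD_s$ exists, show that every solution of $P_{11}^T D_s P_{11} = qD_s$ has the form $CU$ with $U$ in the orthogonal group $\GO(D_s)$. Second, I would prove the \emph{existence} of such a $C$ by invoking the classification of nondegenerate symmetric bilinear forms over $\Fp$.

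The parametrization step is essentially formal. Since $C^T D_s C = qD_s$, inverting gives $(C^T)^{-1} D_s C^{-1} = q^{-1} D_s$. For any $P_{11}$, set $U = C^{-1} P_{11}$; then
\[ U^T D_s U = P_{11}^T (C^T)^{-1} D_s C^{-1} P_{11} = q^{-1} P_{11}^T D_s P_{11}, \]
which equals $D_s$ exactly when $P_{11}^T D_s P_{11} = qD_s$. Since $U \mapsto CU$ is a bijection of $\GL_k(\Fp)$, the solution set is precisely $C \cdot \GO(D_s)$. Identifying $\GO(D_s)$ with $\GO_{2m+1}(\Fp)$, $\GO_{2m}^+(\Fp)$, or $\GO_{2m}^-(\Fp)$ according to the three cases is then a matter of reading off the convention of [Wi09], Section 3.7: the form $I_{2m+1}$ gives the odd-dimensional orthogonal group, the form $I_{2m}$ (square discriminant) gives the $+$ type, and the form $D_s$ with $s$ a non-square gives the $-$ type.

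For existence of $C$, the crucial input is the classification theorem for nondegenerate symmetric bilinear forms over $\Fp$ with $p$ odd: two such forms of the same rank are equivalent over $\Fp$ if and only if their discriminants lie in the same class of $\Fpx / (\Fpx)^2$. Applied to the Gram matrices $D_s$ and $qD_s$, this gives: in Case 1, $\det(I_{2m+1}) = 1$ is a square while $\det(qI_{2m+1}) = q^{2m+1}$ has the same square class as $q$, so equivalence (hence existence of $C$) holds iff $q$ is a square; when $q = r^2$, we may take $C = rI$. In Case 2, $\det(qI_{2m}) = q^{2m}$ is always a square, matching $\det(I_{2m}) = 1$, so $C$ exists for every $q \in \Fpx$. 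In Case 3, $\det(qD_s) = q^{2m} s$ has the same class as $s$, matching $\det(D_s) = s$, so $C$ exists for every $q \in \Fpx$.

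The only place where something genuinely fails is the non-square case in Case 1, and here a direct elementary argument also suffices: if $C^T C = qI$ with $k = 2m+1$, then $\det(C)^2 = q^{2m+1}$ would be a square, forcing $q$ itself to be a square since $2m+1$ is odd. The main potential obstacle is purely notational, namely matching up the three orthogonal group types $\GO_{2m+1}$, $\GO_{2m}^+$, $\GO_{2m}^-$ with the discriminant classification in [Wi09]; everything else reduces to standard linear algebra over $\Fp$. An alternative to invoking the classification theorem would be to write $C$ explicitly as a block-diagonal matrix built from $2 \times 2$ blocks (using the fact that $x^2 + y^2$ and $x^2 + sy^2$ both represent every element of $\Fpx$ over $\Fp$), but the classification route is cleaner and makes the obstruction in Case 1 transparent.
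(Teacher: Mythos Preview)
Your argument is correct and covers all three cases. The main difference from the paper is in how the existence of $C$ is established. The paper bypasses the classification theorem entirely and simply writes down an explicit $C$ in each case: $C = tI$ when $q = t^2$ in Case~1; a block-diagonal matrix of $2\times 2$ rotation blocks $Q = \begin{pmatrix} f & g \\ -g & f \end{pmatrix}$ with $f^2 + g^2 = q$ in Case~2; and in Case~3 the same $Q$-blocks together with one final block $R = \begin{pmatrix} w & sx \\ x & -w \end{pmatrix}$ chosen so that $w^2 + sx^2 = q$. This is exactly the ``alternative'' you mention at the end of your proposal. Your route via the discriminant criterion is cleaner conceptually and makes the odd-$k$ obstruction fall out automatically, while the paper's explicit blocks have the advantage of being self-contained and giving a concrete $C$ that could be used in computations. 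Both approaches share the determinant argument for the ``only if'' direction in Case~1. One point worth noting: the paper does not spell out the coset parametrization $P_{11} = CU$ at all, treating it as obvious once $C$ is in hand; your explicit verification that $U = C^{-1}P_{11}$ lies in $\GO(D_s)$ is a useful addition.
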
 

\begin{proof}  In Case 1) with $k$ odd,  if there exists $C$ so that $C^TC = qI$, then taking determinants gives $\det(C)^2 = q^k$, hence $q$ must be a square.  

For the rest, it suffices to find the matrix $C$ in each case.  

For Case 1), let  $q = t^2$, then $C = tI$ satisfies $C^TC = qI$.

For Case 2), let $q = f^2 + g^2$, let $Q = \bpm f&g\\-g & f\epm$ and let $C = \diag(Q, Q, \ldots, Q)$.  Then $C^TC = qI$. 

For Case 3), let $q = f^2 + g^2$ and $Q$ as in Case 2). For $s$ a non-square in  $\Fp$,  find $w$ and $x$ in $\Fp$ so that $w^2 + sx^2 = q$.  (If $q$ is a square, let $x = 0, w^2 = q$; if $q$ is a non-square, let $w = 0$ and find $x$ so that $sx^2 = w$, possible because the squares have index 2 in $\Fp^{\times}$.)  Then $R = \bpm w & sx\\x & -w \epm$ satisfies $R^T\bpm  1&0\\0&s \epm R =  \bpm q & 0\\0 & sq \epm$.   Let  $C = \diag(Q, Q, \ldots, Q, R)$.  Then $C^TD_sC = qD_s$.  
\end{proof}

\begin{corollary}  Let $A$ be a commutative nilpotent $\Fp$-algebra of dimension $n$ with $A^3 = 0$ and $\dim(A^2) = 1$.  Suppose the structure matrix of $A$ is $\Phi = \diag(D_s, 0)$ where $D_s$ is $k \times k$ and

1)  $k = 2m+1, s = 1$

2)  $k = 2m, s = 1$

3  $k = 2m, s $ is a non-square in $\Fp$.  

Then the number of distinct regular subgroups  of $\Af_n(\Fp)$ associated to $A$, and hence the number of Hopf Galois structures on $L/K$ associated to the isomorphism type of $A$,  is

1)  
\[ \frac {|\GL_n(\Fp)|}{ (\frac {p-1}2 )\cdot |\GO_{2m+1}| \cdot |GL_{n-1-k}| \cdot p^{k(n-1-k) + (n-1)} }\]

2) 
\[ \frac {|\GL_n(\Fp)|}{ (p-1 )\cdot |\GO^+_{2m}| \cdot |GL_{n-1-k}| \cdot p^{k(n-1-k) + (n-1)}} \]

3)
 \[ \frac {|\GL_n(\Fp)|}{ (p-1 )\cdot |\GO^-_{2m}| \cdot |GL_{n-1-k}| \cdot p^{k(n-1-k) + (n-1)}} \]

\end{corollary}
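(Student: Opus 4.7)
The plan is to apply the formula of the first proposition in this section, namely that the number of Hopf Galois structures corresponding to the isomorphism type of $A$ is $|\GL_n(\Fp)|/|\Sta(T)|$, and to compute $|\Sta(T)|$ by combining the block-by-block parametrization of stabilizers obtained above with the structural result in Proposition \ref{sta}.

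First I would recall that the analysis preceding Proposition \ref{sta} has already shown that every $P \in \Sta(T)$ must be block lower triangular with $P_{12} = P_{13} = P_{23} = 0$, and that the remaining blocks $P_{11}, P_{21}, P_{22}, P_{31}, P_{32}, P_{33}$ can be chosen independently subject only to the single constraint $P_{11}^T D_s P_{11} = q D_s$, where $q = P_{33}$. Counting the free parameters:
\[
|\Sta(T)| \;=\; (\text{\# admissible }(q,P_{11})) \cdot |\GL_{n-1-k}(\Fp)| \cdot p^{(n-1-k)k} \cdot p^{k} \cdot p^{n-1-k},
\]
where the three powers of $p$ account for $P_{21}$, $P_{31}$, and $P_{32}$, respectively; they combine to $p^{k(n-1-k)+(n-1)}$.

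Next I would evaluate the factor $(\text{\# admissible }(q, P_{11}))$ using Proposition \ref{sta}. In Case 1 ($k = 2m+1$, $s = 1$), the proposition tells us that $q$ must be a nonzero square in $\Fp$, of which there are $(p-1)/2$, and for each such $q$ a matrix $C$ exists with $C^TC = qI$, so the solutions $P_{11}$ are in bijection with $\GO_{2m+1}(\Fp)$ via $P_{11} = CU$. Hence this factor equals $\tfrac{p-1}{2}\cdot|\GO_{2m+1}(\Fp)|$. In Cases 2 and 3 ($k = 2m$), every nonzero $q$ is admissible, so there are $p-1$ choices of $q$, and for each, $P_{11}$ runs over $C \cdot \GO^{\pm}_{2m}(\Fp)$, giving $(p-1)\cdot |\GO^{\pm}_{2m}(\Fp)|$.

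Substituting these expressions into $|\GL_n(\Fp)|/|\Sta(T)|$ yields the three formulas in the statement. No step presents a real obstacle; the one thing to be careful about is that the enumeration $P_{11} = CU$ in Proposition \ref{sta} genuinely produces distinct matrices (which follows because $C$ is invertible, making the map $U \mapsto CU$ a bijection onto its image), so there is no over-counting and the factor $|\GO|$ is correct for each fixed $q$.
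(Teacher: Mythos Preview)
Your proposal is correct and follows exactly the approach the paper intends: the Corollary is stated in the paper without a separate proof, as an immediate consequence of the formula $|\GL_n(\Fp)|/|\Sta(T)|$, the block-lower-triangular parametrization of $\Sta(T)$ worked out just before Proposition~\ref{sta}, and Proposition~\ref{sta} itself. Your explicit bookkeeping of the free blocks $P_{21},P_{31},P_{32}$ and the counting of admissible pairs $(q,P_{11})$ in each case is precisely how the paper arrives at the displayed denominators.
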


The orders of the $k \times k$ orthogonal groups are polynomials in $p$ of degree  $(k^2 -k)/2$ (c.f. [Wi09], p. 72), and the order of $\GL_n(\Fp)$ is a polynomial of degree $n^2$.  Hence we have 

\begin{corollary} \label{degree} Let $A$ be a commutative nilpotent $\Fp$-algebra of dimension $n$ with $A^3 = 0$, $\dim(A^2) = 1$ and structure matrix of rank $k$.  Let $L/K$ be a Galois extension with Galois group $G \cong (A, +)$.  Then the number of Hopf Galois structures on $L/K$ of type $(A, \circ)$  is a polynomial function of $p$ of degree 
\beal & n^2 - (k^2 - k)/2 - (n-k)(n-1) - 1\\
& = \frac {(2n -k)(k+1)}2 -1.\eeal
\end{corollary}
The number of Hopf Galois structures increases with $k$ and is maximal when $k = n-1$.  
\section{ For $n = 2, 3, 4$}

We compare the counts of Hopf Galois structures  in the last section to the number of Hopf Galois structures found by formal group methods in [Ch05] for $n = 2, 3$.

\subsection*{The case $n=2$}
Let $n = 2,  k = 1$.  Then  $\Phi = (1)$.  For $P$ to stabilize $T$,  
\[ P = \bpm P_{11} & 0\\P_{21} & P_{22} \epm ,\]
and the number of choices for each submatrix in $P$ is
\[ \bpm |\GO_1| & 1 \\ p & \frac {p-1}2 \epm. \]
Since $\GO_1 = \{(1), (-1)\}$,  the size of the stabilizer of the regular subgroup is 
\[ 2 \cdot  p \cdot \frac {p-1}2  = p(p-1).\]
 The order of $GL_2(\Fp)$ is $(p^2-1)(p^2 -p)$.  So there are $p^2 -1$ distinct regular subgroups in the orbit of the regular subgroup corresponding to $\Phi$.

Since every nilpotent algebra structure $A$ on $(\Fp^2, +)$ has $A^3 = 0$, we have counted all Hopf Galois structures on a Galois extension with Galois group $C_p^2$.

\subsection*{The case  $n = 3$} 

\

Subcase:  $k = 1$:   The matrix $P$ is in the stabilizer of the regular subgroup $T$ corresponding to $\Phi = \diag(1, 0)$  if 
\[ P = \bpm P_{11} & 0 & 0\\P_{21} & P_{22} & 0\\P_{31} & P_{32} & P_{33} \epm, \]
all submatrices being $1 \times 1$.  So the number of choices for each entry is
\[  \bpm  |\GO_1| & 1 & 1\\ p & |\GL_1| & 1\\p & p & \frac {p-1}2 \epm.\]
Then $|\GO_1| =2$ and $|\GL_1| = p-1$, so the size of the stabilizer $\Sta(T)$ is 
\[ p^3(p-1)^2 ,\]
and the orbit has cardinality  
\[ |\GL_3(\Fp)|/|\Sta(T)| = (p-1)^3(p+1).\]

\

Subcase:  $k= 2$, $s = 1$, $\Phi = \diag(1, 1)$:  The matrix $P$ is in the stabilizer if 
\[ P = \bpm P_{11} & 0 \\P_{21} & P_{22} \epm, \]
where $P_{11} $ is in $\GO_2^+$.  The number of choices for each submatrix is 
\[  \bpm |\GO_2^+| & 1 \\p^2 & p-1 \epm, \]
and $|\GO_2^+| = 2(p-1)$, so the size of the stabilizer is
\[ 2(p-1)^2 p^2.\]

\

Subcase:  $k= 2$, $s$ a non-square, $\Phi = \diag(1, s)$:  The matrix $P$ is in the stabilizer if 
\[ P = \bpm P_{11} & 0 \\P_{21} & P_{22} \epm, \]
where $P_{11} $ is in $\GO_2^-$.  The number of choices for each submatrix is 
\[  \bpm |\GO_2^-| & 1 \\p^2 & p-1 \epm, \]
and $|\GO_2^-| = 2(p+1)$, so the size of the stabilizer is
\[ 2(p^2-1) p^2.\]

The number of regular subgroups corresponding to each case is $|\GL_3|$ divided by the order of the stabilizer:

For $k=1$, the number of regular subgroups is \[(p^3-1)(p+1).\]

For $k=2, s = 1$, the number of regular subgroups is \[(p^3-1)p(p+1)/2.\]

For $k=2, s$ a non-square,  the number of regular subgroups is 
\[(p^3-1)p(p-1)/2.\]

These agree with the counts found in [Ch05].

The only isomorphism type of nilpotent algebras $A = (\Fp^3, +, \cdot)$ with $A^3 \ne 0$ is the algebra with $\dim(A/A^2) = 1$.  

\subsection*{The case $n= 4$} 
This case has not previously been looked at.  

For $n = 4$ there are four subcases:

\

$k=1$.  Here 
\[ P = \bpm P_{11} & 0 & 0\\P_{21} & P_{22} & 0\\P_{31} & P_{32} & P_{33} \epm, \]
 where $P_{22}$ is $2 \times 2$.  The number of choices for each submatrix is
\[ \bpm |\GO_1| & 1 & 1\\ p^2 & |\GL_2| & 1\\p & p^2 & \frac {p-1}2 \epm.\]
So the size of the stabilizer is
\[ 2p^5  (p^2-1)(p^2-p)(\frac {p-1}2) .\]

\

$k = 2, s =1$:  Here $P_{11}$ is $2 \times 2$.  The number of choices for each matrix is
\[ \bpm |\GO_2^+| & 1 & 1\\ p^2 & |\GL_1| & 1\\p^2 & p & p-1 \epm.\]
So the size of the stabilizer is
\[ 2p^5 (p-1)^3 .\]

\

$k=2$, $s$ a non-square.  It is the same as the last case except $P_{11}$ is in $\GO_2^-$, so the size of the stabilizer is 
\[ 2p^5(p-1)^2(p+1).\]

\

$k= 3$.  Here 
\[ P = \bpm P_{11} & 0\\P_{21} & P_{22} \epm\]
where $P_{11}$ is in $\GO_3$, which has order $2p(p^2-1)$, and $P_{22} = (q)$ where $q$ is a square.  So the order of the stabilizer is
\[ 2p(p^2-1)p^3 \frac{(p-1)}2.\]

The number of regular subgroups in each case is the order of $\GL_4(\Fp)$ divided by the orders of the respective stabilizers:

\begin{center}  \begin{tabular} 
{c|c}
Case & number of regular subgroups\\ \hline
$k=1$ & $(p^2+1)(p+1)(p^3-1)$\\
$k = 2, s=1$ & $p(p^2+1)(p^3-1)(p+1)^2/2$\\
$k = 2, s$ a non-square & $p(p^4-1)(p^3-1)/2$\\
$k = 3$ & $p^2(p^4-1)(p^3-1)$\\ \hline
\end{tabular} \end{center}
 
 The total number of Hopf Galois structure exceeds $p^9$.  Note that the degrees of the polynomials in each case agree with Corollary \ref{degree}.

\subsection*{The Hopf Galois structure}  Given a Galois extension $L/K$ with Galois group $G \cong \Fp^n$, if the commutative nilpotent algebra $A$ with $\dim(A^2) = 1, A^3 = 0$ has diagonal structure matrix  $\Phi = \diag(d_1, \ldots, d_k, 0, \ldots, )$, then the regular subgroup $T$ corresponding to $D$ acts on $GL$ by 
\[ \lambda_{\circ}(r)(e_{t}) = e_{r \circ t} = e_{w}\]
where 
\[ w = r + t + (\sum_{i=1}^k r_it_id_i)x_n ,\]
and $\lb(G)$ conjugates $T$ by 
\beal \lb(t)\lambda_{\circ}(r)\lb(-t)  &= \lambda_{\circ}(r - r\cdot t)\\
&= \lambda_{\circ}(r - \sum_{i=1}^k r_it_id_i)x_n). \eeal

\section{The Galois correspondence ratio}

 Let  $A$ be a commutative $\Fp$-algebra of dimension $n$ with $A^3 = 0$, yielding the skew brace $(A, +, \circ)$.  In [Ch17] (generalized in [Ch18] and [Ch19]) we showed that for a Galois extension $L/K$ with Galois group $(A, +)$ and an $H$-Hopf Galois structure of type $(A, \circ)$,  the image of the Galois correspondence for the Hopf Galois structure is in bijective correspondence with the ideals of $A$.  Thus the Galois correspondence ratio for the Hopf Galois structure is 
 \[ GC(L/K, (A, +), H) = \frac{|\{\text{ ideals of $A$}\}|}{|\{\text{ subgroups of $(A, +)$}\}|}.\]
 We have
 \begin{proposition}
 Let $A$ be as in Corollary 5.3 with a non-zero structure matrix $D_s$ of rank $k \ge 1$.  Let $L/K$ be a Galois extension with Galois group $G \cong (A, +)$ and a Hopf Galois structure associated to the skew brace $(A, +, \circ)$.  Then 
\beal  GC(L/K, (A, +), H) &= O(\frac{1}{p^{(n-1)/2}})\text{ for $n$ odd }; \\&= O(\frac{1}{p^{n/2}})\text{ for $n$ even}. \eeal
 \end{proposition}
 \begin{proof}
 The denominator of $GC(L/K, (A, +), H)$ is equal to the number of subspaces of $\Fp^n$, a known quantity.   So to estimate this ratio, we need to estimate the number of ideals of $A$.
 
 The algebra $A = (A, n, k)$ has basis $(x_1, x_2, \ldots, x_n)$ where $x_i^2 = x_n$ for $i = 1, 2, \ldots, k-1$, $x_k^2 = s \ne 0$ and $x_i^2 = 0$ for $k < i \le n$.  Viewing $A$ as a vector space with basis $x_1, \ldots x_n$, we know (c.f [CG18], Section 1) that the number of subspaces of $\Fp^n$ of dimension $k$ is $\ge p^{k(n-k)}$ and has order of magnitude $= p^{k(n-k)}$.  

We can count the total number of subspaces of $A$ by viewing the subspaces of $A$ as row spaces of $n \times n$ matrices with entries in $\Fp$ and counting the number of parameters of all possible reduced row echelon forms of those $n \times n$ matrices.  So let $R = (c_1, c_2, \ldots, c_r)$ denote the general reduced row echelon form of rank $r$ with $r$ non-zero rows and pivots in columns numbered $c_1, c_2, \ldots, c_r$.  Then the number $n_R$ of $\Fp$-parameters in the matrix $R = (c_1, c_2, \ldots, c_r)$, (counting row by row from the top) is equal to 
 \[(c_2 - c_1-1)+ 2(c_3 - c_2 -1)+ \ldots  + \ldots+ (r-1)(c_r - c_{r-1} -1) +  r(n+1 - c_r -1).\]
 So the dimension of the subspace defined by the matrix $R$ is 
\beal m_R &= p^{n_R}\\& = p^{c_2 - c_1-1} \cdot p^{2(c_3 - c_2 -1)}  \cdot \cdots  \cdot p^{(r-1)(c_r - c_{r-1} -1)}  \cdot p^{r(n+1 - c_r -1)}.\eeal
The largest $m_R$ can be is if $(c_1, c_2, \ldots, c_r) = (1, 2, \ldots, r)$, so that the product reduces to the single term  $p_r = p^{r(n+1- r-1)}$.  Thus for $n$ even, the number $s(\Fp^n)$of subspaces  of $\Fp^n$ is a polynomial in $p$ with a unique highest degree term, when $r = n/2$, namely   $p^{n^2/4}$.  For $n$ odd, $s(\Fp^n)$ is a polynomial in $p$ with two equal highest degree terms, when $r = (n-1)/2$ or $r = (n+1)/2$, namely $p^{(n^2 -1)/4}$.  Thus the leading term of $s(\Fp^n)$ for $n$ odd is $= 2p^{(n^2 -1)/4}$.

Now we estimate the number of ideals of $A$, assuming that in $A$, $x_1^2 = dx_n$ with $d \ne 0$.  The key fact is that if a matrix $R$ represents a subspace which is an ideal and contains an element $x =x_1 +a_2 x_2 + \ldots + a_nx^n$, then it also contains $x_1x = dx_n$.  So $R$ must contain a row $(0, 0, \ldots, 0, 1)$.  Thus the matrices 
\[ R = (1, 2, 3, \ldots, r)\] which give the largest number of parameters do not represent ideals, while the matrices
\[R_I = (1, 2, 3, \ldots, r, n)\] do represent ideals, but have $r$ fewer parameters than $R$ does.  Also
$R' = (2, 3, \ldots r)$ represents an ideal if $x_2^2 = 0$, but $R'$ has $n-r$ fewer parameters than $R$.   In particular, for $n$ odd, the matrix  $R_I$ giving the most parameters is 
\[ R_I = (1, 2, 3, \ldots, \frac{n-1}2, n), \]
namely, $(n^2-1)/4$ parameters, and for $n$ even, the matrix  $R_I$ giving the most parameters is 
\[ R_I = (1, 2, 3, \ldots, \frac{n-1}2, n), \]
namely $\frac{n^2}4$.

Thus in the case of an $A$ closest to the trivial algebra $A = (\Fp^n, +)$, the ratio
\[ \#\{\text{ideals of $A$}\}/\#\{\text{subspaces of $A$}\} = O(1/(p^{(n-1)/2}))\text{ or } O(1/p^{n/2}) \]
for $n$ odd, resp. even.  \end{proof}

If $A$ has $x_i^2 = s_ix_n$ for $s_i \ne 0$ for $i = 1, \ldots, d$, the number of subspaces that are ideals decreases as $d$ increases, to the point where if $d$ = $n-1$, then the ideals of $A$ are the subspaces of $A$ that contain $x_n$.  Then the number of non-zero ideals of $A$ is equal to the number of subspaces of $\Fp^{n-1}$.

\


\begin{thebibliography}{ [CDVS06] }

\bibitem
[AB18] {AB18} 
A. A. Alebdali, N. P. Byott, Counting Hopf-Galois structures on cyclic field extensions of squarefree degree, J. Algebra 493 (2018), 1--19.

\bibitem
[By96] {By9}
N. P. Byott, Uniqueness of Hopf Galois structure of separable field extensions, Comm. Algebra 24 (1996), 3217--3228.


\bibitem
[By04]  {By04}N. P. Byott, Hopf-Galois structures on field extensions with simple Galois groups, Bull. London Math. Soc. 36 (2004), 23--29.

\bibitem
[CDVS06]   {CDVS06}
A. Caranti,  F. Dalla Volta, M. Sala, Abelian regular subgroups of the affine group and radical rings.  Publ. Math. Debrecen  69  (2006),  297--308. 

\bibitem
[CS69]  {CS69}S. U. Chase, M. E. Sweedler, Hopf Algebras and Galois Theory, Lecture Notes in Mathematics, vol 97, Springer-Verlag, Berlin-New York, 1969.

\bibitem
[Ch05]  {Ch05}L. N. Childs, Elementary abelian Hopf Galois structures and polynomial formal groups, J. Algebra 283 (2005), 292--316.


\bibitem
[Ch15]{Ch15} L. N. Childs, On Abelian Hopf Galois structures and finite commutative nilpotent rings, New York J. Math., 21 (2015),  205--229. 


\bibitem
[Ch17]{Ch17}
L. N. Childs, On the Galois correspondence for Hopf Galois structures, 
New York J. Math.  (2017), 1--10.

\bibitem
[Ch18]{Ch18}
L. N. Childs, Skew braces and the  Galois correspondence for Hopf Galois structures,  J. Algebra 511  (2018), 270--291. Zbl {1396.12003},  arXiv:1802.03448.


\bibitem
[Ch19]{Ch19}
L. N. Childs, Bi-skew braces and Hopf Galois structures, New York J. Math. 25 (2019), 574--588.

\bibitem
[CG18]{CG18}
L. N. Childs, C. Greither, Bounds on the number of ideals in finite commutative nilpotent $\Fp$-algebras, Publ. Math. Debrecen 92 (2018), 495--516.


\bibitem
[FCC12]{FCC1}  S. C. Featherstonhaugh, A. Caranti, L. N. Childs, Abelian Hopf Galois structures on prime-power Galois field extensions, Trans. Amer. Math. Soc. 364 (2012), 3675--3684.

\bibitem[GP87] {GP87}
C. Greither, B. Pareigis, Hopf Galois theory for separable field extensions, J. Algebra 106 (1987),  239--258.


\bibitem
[GV17]{GV17}  
 L. Guarneri, L. Vendramin,  Skew braces and the Yang-Baxter equation, Math. Comp. 86 (2017), no. 307, 2519--2534. MR3647970, Zbl 1371.16037,  arXiv:1511.03171.





\bibitem
[Ru07]{Ru07}
   Rump, W.,  Braces, radical rings, and the quantum Yang-Baxter equation,  J. Algebra 307 (2007), 153--170. MR2278047, Zbl {1115.16022},  arXiv:1804.01360.

\bibitem
[SV18]{SV18}
A. Smoktunowicz, L. Vendramin, On skew braces (with an appendix by N. Byott and L. Vendramin), J. Comb. Algebra 2 (2018), no.1, 47--86.   
\bibitem
[Wi09]{Wi09}
R. A. Wilson, The Finite Simple Groups, London, Springer-Verlag, 2009.


\end{thebibliography}
\end{document}